\newtheorem{thm}{Theorem}
\newtheorem{lem}[thm]{Lemma}
\theoremstyle{remark}
\newtheorem{rem}[thm]{Remark}
\def\ve{\varepsilon}
\def\Pr{{\mathbb P}}
\def\Ex{{\mathbb E}}
\def\er{{\mathbb R}}
\def\ind{\mathbf{1}}
\def\diag{\mathrm{diag}}
\def\caln{\mathcal{N}}
\title{Royen's proof of the Gaussian correlation inequality}
\author{Rafa{\l} Lata{\l}a and Dariusz Matlak}
\date{}
\begin{document}

\maketitle

\begin{abstract}
We present in detail Thomas Royen's proof of the Gaussian correlation inequality which states that
$\mu(K\cap L)\geq \mu(K)\mu(L)$ for
any centered Gaussian measure $\mu$ on $\er^d$ and symmetric convex sets $K,L$ in $\er^d$.
\end{abstract}

\section{Introduction}
The aim of this note is to present in a self contained way the beautiful proof of the Gaussian correlation 
inequality, due to Thomas Royen \cite{Ro}. Although the method is rather simple and elementary, 
we found the original paper not too easy to follow. One of the reasons behind it is that in \cite{Ro} the correlation
inequality was established for more general class of probability measures. Moreover, the author assumed that the
reader is familiar with properties of certain distributions and may justify some calculations by herself/himself. 
We decided to reorganize a bit Royen's proof, restrict it only to the Gaussian case and add some missing details.
We hope that this way a wider readership may appreciate the remarkable result of Royen.   


The statement of the Gaussian correlation inequality is as follows.

\begin{thm}
\label{thm:gcc}
For any closed symmetric sets $K,L$ in $\er^d$ and any centered Gaussian measure $\mu$ on $\er^d$ we have
\begin{equation}
\label{eq:gcc}
\mu(K\cap L)\geq \mu(K)\mu(L).
\end{equation}
\end{thm}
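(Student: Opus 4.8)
The plan is to reduce the inequality \eqref{eq:gcc} to a finite-dimensional statement about the probability that a Gaussian vector lies in a symmetric box, and then to prove that statement by interpolating the covariance matrix between ``independence of the two blocks'' and the true law. First I would reduce to slabs: each of $K,L$, being a closed symmetric convex set, is an intersection of symmetric slabs $\{x:|\langle u,x\rangle|\le1\}$, and approximating by finite sub-intersections (whose $\mu$-measures converge to those of $K$, $L$, $K\cap L$ by continuity from above) it suffices to treat $K=\bigcap_{i=1}^m\{|\langle u_i,x\rangle|\le1\}$ and $L=\bigcap_{j=1}^n\{|\langle v_j,x\rangle|\le1\}$. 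If $X$ has law $\mu$, then $Z=(\langle u_1,X\rangle,\dots,\langle u_m,X\rangle,\langle v_1,X\rangle,\dots,\langle v_n,X\rangle)$ is a centered Gaussian vector in $\er^p$, $p=m+n$, and \eqref{eq:gcc} becomes the assertion that, for thresholds $t_k>0$,
\[
\Pr\!\big(Z_k^2\le t_k,\ \forall k\big)\ \ge\ \Pr\!\big(Z_k^2\le t_k,\ k\le m\big)\,\Pr\!\big(Z_k^2\le t_k,\ k>m\big).
\]

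Next I would interpolate the covariance. Writing the covariance of $Z$ in block form with diagonal blocks $A$ (size $m$) and $B$ (size $n$) and cross block $C$, set $\Sigma(s)=(1-s)\diag(A,B)+s\Sigma$, that is, the same matrix with $C$ replaced by $sC$; being a convex combination of positive semidefinite matrices, $\Sigma(s)\succeq0$ for all $s\in[0,1]$. Let $F(s)=\Pr_{\Sigma(s)}(Z_k^2\le t_k,\ \forall k)$. At $s=0$ the two blocks are independent, so $F(0)$ equals the product on the right above, while $F(1)$ equals the joint probability on the left; thus it suffices to prove $F'(s)\ge0$ on $[0,1]$.

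The heart of the argument is the sign of $F'(s)$. Since only the cross entries $\sigma_{ij}$ with $i\le m<j$ vary with $s$, I would pass to the joint law of the squares $Y_k=Z_k^2$, whose density $g_\Sigma$ on $[0,\infty)^p$ is determined by the explicit Laplace transform $\Ex\exp(-\sum_k u_kZ_k^2)=\det(I+2U\Sigma)^{-1/2}$ with $U=\diag(u_1,\dots,u_p)$, so that $F(s)=\int_{\prod_k[0,t_k]}g_{\Sigma(s)}(y)\,dy$. The crucial claim---Royen's key lemma---is that the distribution function of this multivariate chi-square law is nondecreasing as its off-diagonal block is scaled up. What makes this delicate is that a naive Plackett-type differentiation in a single $\sigma_{ij}$ has no determined sign: the cross-covariances may have mixed signs, and, in contrast to the two-dimensional case, one cannot flip the signs of several coordinates simultaneously so as to make them all nonnegative. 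The route I expect to work is to differentiate the determinantal transform using $\partial_s\det M=\det M\,\mathrm{tr}(M^{-1}\partial_sM)$, invert, and recognize $\partial_sg_{\Sigma(s)}$ as a combination of densities of multivariate chi-square laws with shifted parameters and nonnegative weights; integrating over the box then gives $F'(s)\ge0$.

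The step I expect to be hardest is exactly this positivity. Besides the routine justifications---differentiation under the integral sign and validity of the transform inversion, handled by an $\ve I$ regularization making $\Sigma(s)\succ0$ together with a limiting argument---the real difficulty is to check that the coefficients generated by differentiating $\det(I+2U\Sigma(s))^{-1/2}$ assemble into a manifestly nonnegative quantity. This determinantal positivity, rather than any isolated estimate, is the genuine content of Royen's proof, and I would formulate it as a stand-alone lemma on the multivariate gamma distribution before inserting it into the interpolation scheme above.
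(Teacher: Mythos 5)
Your skeleton is exactly the paper's: reduction to finitely many symmetric slabs, interpolation of the cross-covariance block, passage to the squared variables via the Laplace transform $|I+2U\Sigma|^{-1/2}$, and reduction of everything to $F'(s)\ge 0$. Up to that point the proposal is sound. But the step you yourself flag as the crux is not just missing — the mechanism you predict for it cannot work. You propose to recognize $\partial_s g_{\Sigma(s)}$ as ``a combination of densities of multivariate chi-square laws with shifted parameters and nonnegative weights.'' No such representation exists: since $\int_{(0,\infty)^p} g_{\Sigma(s)}(y)\,dy=1$ for every $s$, and differentiation under the integral is legitimate here (you invoke it yourself), we get $\int_{(0,\infty)^p}\partial_s g_{\Sigma(s)}(y)\,dy=0$, whereas a nonnegative combination of probability densities integrates to the sum of its weights, forcing all weights to vanish. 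Equivalently, pointwise $\partial_s g_{\Sigma(s)}\ge 0$ would make $g_{\Sigma(1)}=g_{\Sigma(0)}$ a.e., i.e.\ the two blocks were independent to begin with. Densities with shifted parameters do appear in Royen's argument, but in \emph{alternating-sign} combinations: they assemble into mixed partial derivatives of a single gamma density, and no pointwise positivity holds anywhere.

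The two ideas your outline lacks are the following. First, the coefficient positivity is obtained minor by minor, not from the differentiated transform as a whole: expanding $|I+\Lambda C(\tau)|=1+\sum_{\emptyset\neq J\subset[n]}|C(\tau)_J|\prod_{j\in J}\lambda_j$ and splitting $J=J_1\cup J_2$ along the two blocks, one has $|C(\tau)_J|=|C_{J_1}||C_{J_2}|\prod_i\bigl(1-\tau^2\mu_i\bigr)$, where the $\mu_i$ are the eigenvalues of $C_{J_1}^{-1/2}C_{J_1J_2}C_{J_2}^{-1}C_{J_2J_1}C_{J_1}^{-1/2}$ and lie in $[0,1]$; hence every $a_J(\tau):=-\frac{\partial}{\partial\tau}|C(\tau)_J|\ge 0$, and the $\tau$-derivative of the transform is $\frac12|I+\Lambda C(\tau)|^{-3/2}\sum_J a_J(\tau)\prod_{j\in J}\lambda_j$. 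Second — and this is the device that replaces your false pointwise claim — each term $|I+\Lambda C(\tau)|^{-3/2}\prod_{j\in J}\lambda_j$ is the Laplace transform of the \emph{mixed partial derivative} $\frac{\partial^{|J|}}{\partial x_J}h_\tau$ of an explicitly constructed multivariate gamma density $h_\tau$ whose transform is $|I+\Lambda C(\tau)|^{-3/2}$; the factor $\prod_{j\in J}\lambda_j$ is precisely what differentiation in the $x_J$ variables produces, because $h_\tau$ and its derivatives vanish as any coordinate tends to $0$. This yields $\frac{\partial}{\partial\tau}f(x,\tau)=\sum_J\frac12 a_J(\tau)\frac{\partial^{|J|}}{\partial x_J}h_\tau(x)$, a signed object whose positivity emerges only after integration over the box:
\begin{equation*}
\int_{\prod_i[0,s_i]}\frac{\partial^{|J|}}{\partial x_J}h_\tau(x)\,dx
=\int_{\prod_{j\in J^c}[0,s_j]}h_\tau(s_J,x_{J^c})\,dx_{J^c}\ \ge\ 0,
\end{equation*}
since the boundary terms at $0$ vanish and only the top face survives. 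So the positivity you need lives at the level of box integrals of derivatives of a gamma density, not at the level of the density derivative itself; without the principal-minor expansion, the eigenvalue bound, and this integration-by-parts mechanism, the interpolation scheme does not close.
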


For $d=2$ the result was proved by Pitt \cite{Pi}. In the case when one of the sets $K,L$ is a symmetric strip 
(which corresponds to $\min\{n_1,n_2\}=1$ in Theorem \ref{thm:gcc2} below) inequality \eqref{eq:gcc} was established 
independently by Khatri \cite{Kh} and \v{S}id\'ak \cite{Si}. 
Harg\'e \cite{Ha} generalized the Khatri-\v{S}idak result to the case when one of the sets is a symmetric ellipsoid.
Some other partial results may be found in papers of Borell \cite{Bo} and Schechtman, Schlumprecht and Zinn \cite{SSZ}.

Up to our best knowledge Thomas Royen was the first to present a complete proof of the Gaussian correlation inequality.
Some other recent attempts may be found in \cite{Me} and \cite{Qi}, however both papers are very long and difficult to check. 
The first version of \cite{Me}, placed on the arxiv before Royen's paper, contained a fundamental mistake
(Lemma 6.3 there was wrong).

Since any symmetric closed set is a countable intersection of symmetric strips, it is enough to show \eqref{eq:gcc}
in the case when
\[
K=\{x\in \er^d\colon\ \forall_{1\leq i\leq n_1}\ |\langle x,v_i\rangle|\leq t_i\}\quad \mbox{and} \quad
L=\{x\in\er^d \colon\ \forall_{n_1+1\leq i\leq n_1+n_2}\ |\langle x,v_i\rangle|\leq t_i\},
\]
where $v_i$ are vectors in $\er^d$ and $t_i$ nonnegative numbers. If we set $n=n_1+n_2$, $X_i:=\langle v_i,G\rangle$,
where $G$ is the Gaussian random vector distributed according to $\mu$, we obtain the following equivalent form of 
Theorem \ref{thm:gcc}. 

\begin{thm}
\label{thm:gcc2}
Let $n=n_1+n_2$ and $X$ be an $n$-dimensional centered Gaussian vector. 
Then for any $t_1,\ldots,t_n>0$,
\begin{align*}
\Pr(|X_1|\leq t_1,\ldots,&|X_{n}|\leq t_n) 
\\
&\geq \Pr(|X_1|\leq t_1,\ldots,|X_{n_1}|\leq t_{n_1})\Pr(|X_{n_1+1}|\leq t_{n_1+1},\ldots,|X_{n}|\leq t_{n}).
\end{align*}
\end{thm}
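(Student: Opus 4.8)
The plan is to work with the squared formulation. Setting $x_i=t_i^2$ and $W_i=X_i^2$, the inequality to be proved is that the box probability $\Pr(W_1\le x_1,\dots,W_n\le x_n)$, computed for the covariance $\Sigma$ of $X$, dominates the product of the two corresponding block probabilities. By a routine approximation I may assume $\Sigma\succ0$ and, since the $t_i$ are finite and positive, everything below is smooth. The natural device is an interpolation between the independent model and the true model. Writing $\Sigma$ in block form for the partition $\{1,\dots,n_1\}\cup\{n_1+1,\dots,n\}$ and letting $\Sigma_0=\diag(\Sigma_{11},\Sigma_{22})$ be its block-diagonal truncation, I would set
\[
\Sigma(\rho)=(1-\rho)\Sigma_0+\rho\Sigma,\qquad \rho\in[0,1],
\]
which is positive definite throughout (a convex combination of two covariance matrices), equals $\Sigma$ at $\rho=1$, and is block-diagonal at $\rho=0$. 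Denoting by $f(\rho)$ the box probability $\Pr(W_1\le x_1,\dots,W_n\le x_n)$ under $\caln(0,\Sigma(\rho))$, the value $f(0)$ factorizes into the product of the two block probabilities (the groups are then independent), while $f(1)$ is the left-hand side of Theorem~\ref{thm:gcc2}. Hence it suffices to prove that $f$ is nondecreasing, i.e. $f'(\rho)\ge0$ for every $\rho\in[0,1]$; integrating from $0$ to $1$ then gives the claim.

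Since only the cross-block entries of the covariance move with $\rho$, the chain rule yields $f'(\rho)=\sum_{i\le n_1<j}\sigma_{ij}\,\partial_{\sigma_{ij}}f$, the sum running over cross pairs. The essential structural point—and the reason symmetric sets behave unlike orthants, where the sign of each correlation matters—is that the law of the squared vector $(W_1,\dots,W_n)$ is invariant under the sign-flip group $\Sigma\mapsto E\Sigma E$ with $E=\diag(\pm1)$, and that the squares are always positively associated: $\Cov(W_i,W_j)=2\sigma_{ij}^2\ge0$. Thus increasing the coupling can only increase the joint lower box probability, regardless of the signs of the $\sigma_{ij}$. I would make this quantitative through the Laplace transform, which is the generating object for the multivariate gamma (chi-square) density $f_\Sigma$ of $W$,
\[
\Ex\exp\Big(-\sum_{i=1}^n s_iW_i\Big)=\det\big(I+2\,\diag(s_1,\dots,s_n)\,\Sigma\big)^{-1/2},\qquad s_i\ge0.
\]
In the simplest case $n=2$ the determinant depends on $\Sigma$ only through $\sigma_{12}^2$, so $f$ is a function of $c=\rho^2\sigma_{12}^2$ and $f'(\rho)=2\rho\sigma_{12}^2\,\tfrac{df}{dc}$; there the problem collapses to the monotonicity $\tfrac{df}{dc}\ge0$ of the bivariate chi-square box probability in its covariance parameter, which the Laplace representation makes transparent.

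The main obstacle, and the technical heart of Royen's argument, is the multivariate analog of this monotonicity: one must establish a differentiation formula showing that the cross-block derivative $\sum_{i\le n_1<j}\sigma_{ij}\,\partial_{\sigma_{ij}}f$ is a \emph{positive} combination of box probabilities for multivariate gamma laws with modified parameters, and hence nonnegative. This is the gamma-setting counterpart of Plackett's heat-type identity $\partial_{\sigma_{ij}}\phi_\Sigma=\partial_{x_i}\partial_{x_j}\phi_\Sigma$ for the normal density, but adapted so that the resulting terms carry a definite sign rather than merely relating mixed derivatives. I expect to prove it by inverting the Laplace transform to obtain a workable integral representation of the density $f_\Sigma$, differentiating under the integral sign with respect to the coupling, and recognizing that each cross-block contribution is governed by a nonnegative weight (ultimately traceable to the factor $\partial_\rho(\rho^2\sigma_{ij}^2)=2\rho\sigma_{ij}^2\ge0$ for $n=2$ and to an analogous convolution/mixing structure in the auxiliary gamma variables for general $n$) multiplied by a manifestly nonnegative density-like object. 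Once $f'(\rho)\ge0$ is secured for all $\rho\in[0,1]$, the desired inequality $f(1)\ge f(0)$ follows at once, completing the proof of Theorem~\ref{thm:gcc2}.
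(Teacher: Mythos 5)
Your setup coincides exactly with the paper's: your $\Sigma(\rho)=(1-\rho)\Sigma_0+\rho\Sigma$ is precisely Royen's interpolation $C(\tau)$ (only the cross-block entries get scaled by $\rho$), the reduction to $f'(\rho)\ge 0$ is the same, and the Laplace transform $|I+2\,\mathrm{diag}(s)\Sigma|^{-1/2}$ of the squared vector is the same starting point. However, your proposal stops exactly where the actual proof begins. The statement that the positivity of the cross-block derivative is something you ``expect to prove'' by inverting the Laplace transform and ``recognizing that each cross-block contribution is governed by a nonnegative weight'' is not an argument; it is a restatement of the theorem in Laplace-transform language. Your heuristic that $\Cov(W_i,W_j)=2\sigma_{ij}^2\ge 0$ ``thus'' increasing coupling increases the box probability is a non sequitur: pairwise nonnegative covariance of the squares does not imply monotonicity of joint box probabilities, and indeed no soft positivity principle is known that yields this. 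That monotonicity is the whole content of the Gaussian correlation inequality.

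Concretely, three ingredients are missing, and they constitute the proof rather than its technical residue. First, one expands $|I+\Lambda C(\tau)|=1+\sum_{\emptyset\neq J\subset[n]}|C(\tau)_J|\prod_{j\in J}\lambda_j$ over principal minors, and shows each minor $|C(\tau)_J|$ is \emph{nonincreasing} in $\tau$: writing $J=J_1\cup J_2$ along the two blocks, one has $|C(\tau)_J|=|C_{J_1}||C_{J_2}|\prod_i(1-\tau^2\mu_{J_1,J_2}(i))$ where the eigenvalues $\mu_{J_1,J_2}(i)$ of $C_{J_1}^{-1/2}C_{J_1J_2}C_{J_2}^{-1}C_{J_2J_1}C_{J_1}^{-1/2}$ lie in $[0,1]$; this is where the definite sign $a_J(\tau):=-\partial_\tau|C(\tau)_J|\ge 0$ actually comes from (note it is an eigenvalue statement about Schur-complement-type products, not the scalar observation $\partial_\rho(\rho^2\sigma_{ij}^2)\ge 0$ you extrapolate from $n=2$). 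Second, differentiating the transform gives $\partial_\tau|I+\Lambda C(\tau)|^{-1/2}=\tfrac12|I+\Lambda C(\tau)|^{-3/2}\sum_J a_J(\tau)\prod_{j\in J}\lambda_j$, and one must produce an explicit nonnegative density $h_\tau$ on $(0,\infty)^n$ (a multivariate gamma-type density, built as a gamma mixture over auxiliary chi-square variables) whose Laplace transform is $|I+\Lambda C(\tau)|^{-3/2}$; then $\prod_{j\in J}\lambda_j|I+\Lambda C(\tau)|^{-3/2}$ is the transform of the mixed derivative $\partial^{|J|}_{x_J}h_\tau$, whence $\partial_\tau f(x,\tau)=\sum_J\tfrac12 a_J(\tau)\,\partial^{|J|}_{x_J}h_\tau(x)$. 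The exponent $-3/2$ (rather than $-1/2$) matters: it makes the shape parameters exceed $1$, which is what lets the mixed partials exist, be integrable, and have vanishing boundary terms. Third, the conclusion requires $\int_K \partial^{|J|}_{x_J}h_\tau\,dx\ge 0$, which follows by integrating out the coordinates in $J$ and using that the boundary contributions at $x_j=0$ vanish, leaving a nonnegative integral of $h_\tau$ over a face of the box, not a ``box probability with modified parameters'' as you describe. Without these three steps your text is a correct plan of attack but not a proof; the gap is precisely the theorem.
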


\begin{rem}
i) The standard approximation argument shows that the Gaussian correlation inequality holds for centered Gaussian measures
on separable Banach spaces. \\
ii) Thomas Royen established Theorem \ref{thm:gcc2} for more general class of random vectors $X$ such
that $X^2=(X_1^2,\ldots,X_n^2)$ has an $n$-variate gamma distribution (see \cite{Ro} for details). 
\end{rem}


{\bf Notation.} By ${\mathcal N}(0,C)$ we denote the centered Gaussian measure with the covariance matrix $C$. 
We write $M_{n\times m}$ for a set of $n\times m$ matrices and $|A|$ for the determinant of a square matrix $A$.
For a matrix $A=(a_{ij})_{i,j\leq n}$ and $J\subset [n];=\{1,\ldots,n\}$ by $A_J$ we denote the square matrix 
$(a_{ij})_{i,j\in J}$ and by $|J|$ the cardinality of $J$.

\section{Proof of Theorem \ref{thm:gcc2}}

Without loss of generality we may and will assume that the covariance matrix $C$ of $X$ is nondegenerate 
(i.e. strictly positively defined). 
We may write $C$ as
\[
C=\left(\begin{array}{ll}
C_{11}& C_{12}
\\
C_{21}& C_{22}
\end{array}\right),
\]
where $C_{ij}$ is the $n_i\times n_j$ matrix. Let
\[
C(\tau):=\left(
\begin{array}{ll}
C_{11}& \tau C_{12}
\\
\tau C_{21}& C_{22}
\end{array}\right),
\quad 0\leq \tau\leq 1.
\]
Set $Z_i(\tau):=\frac{1}{2}X_i(\tau)^2$, $1\leq i\leq n$, where $X(\tau)\sim\caln(0,C(\tau))$.

We may restate the assertion as
\[
\Pr(Z_1(1)\leq s_1,\ldots,Z_n(1)\leq s_n)\geq \Pr(Z_1(0)\leq s_1,\ldots,Z_n(0)\leq s_n),
\]
where $s_1=\frac{1}{2}t_i^2$.
Therefore it is enough to show that the function
\[
\tau\mapsto \Pr(Z_1(\tau)\leq s_1,\ldots,Z_n(\tau)\leq s_n) \mbox{ is nondecreasing on }[0,1]. 
\]

Let $f(x,\tau)$ denote the density of the random vector $Z(\tau)$ and $K=[0,s_1]\times\cdots\times [0,s_n]$. We have
\[
\frac{\partial}{\partial\tau}\Pr(Z_1(\tau)\leq s_1,\ldots,Z_n(\tau)\leq s_n)
=\frac{\partial}{\partial\tau}\int_K f(x,\tau) dx=\int_K \frac{\partial}{\partial\tau}f(x,\tau) dx,
\]
where the last equation follows by Lemma \ref{lem:diff} applied to $\lambda_1=\ldots=\lambda_n=0$. 
Therefore it is enough to show that
$\int_K \frac{\partial}{\partial\tau}f(x,\tau)\geq 0$.

To this end we will compute the Laplace transform of $\frac{\partial}{\partial\tau}f(x,\tau)$. By Lemma \ref{lem:diff}, applied to
$K=[0,\infty)^n$,
we have for any $\lambda_1\ldots,\lambda_n\geq 0$,
\[
\int_{[0,\infty)^n}e^{-\sum_{i=1}^n\lambda_i x_i}\frac{\partial}{\partial\tau}f(x,\tau)dx
=\frac{\partial}{\partial\tau}\int_{[0,\infty)^n}e^{-\sum_{i=1}^n\lambda_i x_i}f(x,\tau)dx.
\]
However by Lemma \ref{lem:Lap} we have
\[
\int_{[0,\infty)^n}e^{-\sum_{i=1}^n\lambda_i x_i}f(x,\tau)dx=
\Ex\exp\left(-\frac{1}{2}\sum_{k=1}^n\lambda_k X_k^2(\tau)\right)=|I+\Lambda C(\tau)|^{-1/2},
\]
where $\Lambda=\diag(\lambda_1,\ldots,\lambda_n)$.

Formula \eqref{eq:det1} below yields
\[
|I+\Lambda C(\tau)|=1+\sum_{\emptyset\neq J\subset [n]}|(\Lambda C(\tau))_J|
=1+\sum_{\emptyset\neq J\subset [n]}|C(\tau)_J|\prod_{j\in J}\lambda_j.
\]
Fix $\emptyset\neq J\subset [n]$. Then $J=J_1\cup J_2$, where  $J_1:=[n_1]\cap J$, $J_2:=J\setminus [n_1]$ and
$C(\tau)_J=\left(\begin{array}{ll} C_{J_1}& \tau C_{J_1 J_2}\\ \tau C_{J_2 J_1}& C_{J_2} \end{array}\right)$.
If $J_1=\emptyset$ or $J_2=\emptyset$ then $C(\tau)_J=C_J$, otherwise by \eqref{eq:det2} we get
\begin{align*}
|C(\tau)_J|
&=|C_{J_1}||C_{J_2}|\left|I_{|J_1|}-\tau^{2}C_{J_1}^{-1/2}C_{J_1J_2}C_{J_2}^{-1}C_{J_2J_1}C_{J_1}^{-1/2}\right|
\\
&=|C_{J_1}||C_{J_2}|\prod_{i=1}^{|J_1|}(1-\tau^2 \mu_{J_1,J_2}(i)),
\end{align*}
where $\mu_{J_1,J_2}(i)$, $1\leq i\leq |J_1|$ denote the eigenvalues of 
$C_{J_1}^{-1/2}C_{J_1J_2}C_{J_2}^{-1}C_{J_2J_1}C_{J_1}^{-1/2}$ (by \eqref{eq:det2a} they belong to $[0,1]$).
Thus for any $\emptyset\neq J\subset [n]$ and $\tau\in [0,1]$ we have
\[
a_J(\tau):=-\frac{\partial}{\partial \tau}|C(\tau)_J|\geq 0.
\]
Therefore
\begin{align*}
\frac{\partial}{\partial \tau}|I+\Lambda C(\tau)|^{-1/2}
&=-\frac{1}{2}|I+\Lambda C(\tau)|^{-3/2}\sum_{\emptyset\neq J\subset [n]}\frac{\partial}{\partial \tau}|C(\tau)_J||\Lambda_J|
\\
&=\frac{1}{2} |I+\Lambda C(\tau)|^{-3/2}\sum_{\emptyset\neq J\subset [n]}a_J(\tau)\prod_{j\in J}\lambda_j.
\end{align*}
We have thus shown  that
\[
\int_{[0,\infty)^n}e^{-\sum_{i=1}^n\lambda_i x_i}\frac{\partial}{\partial\tau}f(x,\tau)dx
= \sum_{\emptyset\neq J\subset [n]}\frac{1}{2}a_J(\tau)|I+\Lambda C(\tau)|^{-3/2}\prod_{j\in J}\lambda_j.
\]

Let $h_{\tau}:=h_{3,C(\tau)}$ be the density function on $(0,\infty)^n$ defined by \eqref{eq:defhkC}. 
By Lemmas \ref{lem:LaplhkC}
and \ref{lem:proph} iii) we know that
\[
|I+\Lambda C(\tau)|^{-3/2}\prod_{j\in J}\lambda_j=\int_{(0,\infty)^n}e^{-\sum_{i=1}^n\lambda_i x_i}
\frac{\partial^{|J|}}{\partial x_J}h_{\tau}.
\]
This shows that
\[
\frac{\partial}{\partial\tau}f(x,\tau)=
\sum_{\emptyset\neq J\subset [n]}\frac{1}{2}a_J(\tau)\frac{\partial^{|J|}}{\partial x_J}h_{\tau}(x).
\]

Finally recall that $a_J(\tau)\geq 0$ and observe that by Lemma \ref{lem:proph} ii),
\[
\lim_{x_i\to 0+}\frac{\partial^{|I|}}{\partial x_{I}}h_{\tau}(x)\quad \mbox{ for }i\notin I\subset [n],
\]
thus
\[
\int_K \frac{\partial^{|J|}}{\partial x_J}h_{\tau}(x)dx=
\int_{\prod_{j\in J^c}[0,t_j]}h_{\tau}(t_J,x_{J^c})dx_{J^c}\geq 0,
\] 
where $J^c=[n]\setminus J$ and $y=(t_J,x_{J^c})$ if $y_i=t_i$ for $i\in J$ and $y_i=x_i$ for $i\in J^c$. \hfill $\qed$

\section{Auxiliary Lemmas}

\begin{lem}
\label{lem:Lap}
Let $X$ be an $n$ dimensional centered Gaussian vector with the covariance matrix $C$. Then for any 
$\lambda_1,\ldots,\lambda_n\geq 0$ we have 
\[
\Ex\exp\left(-\sum_{i=1}^n\lambda_iX_i^2\right)=|I_n+2\Lambda C|^{-1/2},
\]
where $\Lambda:=\diag(\lambda_1,\ldots,\lambda_n)$.
\end{lem}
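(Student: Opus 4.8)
The plan is to reduce the expectation to a single Gaussian integral and then read off the determinant. First I would assume that $C$ is nondegenerate, so that $X$ admits the density $(2\pi)^{-n/2}|C|^{-1/2}\exp(-\frac{1}{2}x^{T}C^{-1}x)$ on $\er^n$; the general case will follow at the end by continuity. Writing $\sum_{i=1}^n\lambda_i X_i^2=X^{T}\Lambda X$ and combining the two quadratic forms in the exponent, the integrand becomes $\exp(-\frac{1}{2}x^{T}(C^{-1}+2\Lambda)x)$, so that
\[
\Ex\exp\Big(-\sum_{i=1}^n\lambda_i X_i^2\Big)
=(2\pi)^{-n/2}|C|^{-1/2}\int_{\er^n}\exp\Big(-\tfrac{1}{2}x^{T}(C^{-1}+2\Lambda)x\Big)\,dx.
\]

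The observation making this legitimate is that $C^{-1}+2\Lambda$ is positive definite: $C^{-1}$ is positive definite and $2\Lambda$ is positive semidefinite because every $\lambda_i\geq 0$. Hence the classical formula $\int_{\er^n}\exp(-\frac{1}{2}x^{T}Ax)\,dx=(2\pi)^{n/2}|A|^{-1/2}$, valid for positive definite $A$, applies with $A=C^{-1}+2\Lambda$ and gives
\[
\Ex\exp\Big(-\sum_{i=1}^n\lambda_i X_i^2\Big)=|C|^{-1/2}|C^{-1}+2\Lambda|^{-1/2}=\big(|C|\,|C^{-1}+2\Lambda|\big)^{-1/2}.
\]

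It then remains only to simplify the determinant. By multiplicativity, $|C|\,|C^{-1}+2\Lambda|=|C(C^{-1}+2\Lambda)|=|I_n+2C\Lambda|$, and the commutation identity $|I_n+AB|=|I_n+BA|$ (applied with $A=2C$, $B=\Lambda$) rewrites this as $|I_n+2\Lambda C|$, which is exactly the claimed expression. I expect the only points genuinely requiring care to be this final determinant manipulation together with the positive-definiteness check guaranteeing convergence of the integral; the evaluation of the Gaussian integral itself is standard and I would not grind through it.

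Finally, to remove the nondegeneracy assumption I would argue by continuity: both sides are continuous in $C$ (the right-hand side is a polynomial in the entries of $C$, while the left-hand side is a bounded expectation depending continuously on the covariance, e.g.\ via the coupling $X_{\ve}=X+\sqrt{\ve}\,G$ with $G$ an independent standard Gaussian and dominated convergence). Applying the established identity to the nondegenerate matrices $C+\ve I$ and letting $\ve\to 0+$ yields the statement for arbitrary covariance $C$.
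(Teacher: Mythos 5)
Your proof is correct, but it takes a genuinely different route from the paper's. The paper never inverts $C$: it factors $C=AA^T$ and writes $X=AY$ with $Y\sim\caln(0,I_n)$, so that $\Ex\exp(-\langle\Lambda X,X\rangle)=\Ex\exp(-\langle A^T\Lambda A\,Y,Y\rangle)$, then evaluates this by the formula $\Ex\exp(\langle BY,Y\rangle)=|I_n-2B|^{-1/2}$ (for symmetric $B$ with $2B<I_n$), and finishes with the same commutation identity $|I_n+A_1A_2|=|I_n+A_2A_1|$ that you use. Since the integration there is always against the law of the standard Gaussian $Y$, degenerate covariance matrices are covered automatically and no approximation is needed. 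You instead integrate against the density of $X$ itself, which forces the nondegeneracy assumption and the concluding continuity step $C+\ve I\to C$; that step is sound as written (dominated convergence via the coupling on the left, continuity of the determinant on the right), and it is important that you included it, because the paper later applies this lemma, in Lemma \ref{lem:LaplhkC}, to the covariance $\frac{1}{2\mu}AA^T$, which can be singular. One small point you leave implicit: passing to the limit on the right-hand side requires $|I_n+2\Lambda C|\neq 0$ at the limit point; this is harmless since $\Lambda C$ has the same nonzero spectrum as the positive semidefinite matrix $C^{1/2}\Lambda C^{1/2}$, so all its eigenvalues are nonnegative and $|I_n+2\Lambda C|\geq 1$. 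In short, your completion-of-the-square computation is the more familiar one but costs an extra limiting argument; the paper's factorization $C=AA^T$ buys the degenerate case for free.
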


\begin{proof}
Let $A$ be a symmetric positively defined matrix. Then $A=UDU^T$ for some $U\in O(n)$ and 
$D=\diag(d_1,d_2,\ldots,d_n)$. Hence
\[
\int_{\er^n}\exp(-\langle Ax,x\rangle)dx=\int_{\er^n}\exp(-\langle Dx,x\rangle)dx
=\prod_{k=1}^n\sqrt{\frac{\pi}{d_k}}=\pi^{n/2}|D|^{-1/2}=\pi^{n/2}|A|^{-1/2}.
\]
Therefore for a canonical Gaussian vector $Y\sim\caln(0,I_n)$ and a symmetric matrix $B$
such that $2B<I_n$ we have
\begin{align*}
\Ex\exp(\langle BY,Y\rangle)
&=(2\pi)^{-n/2}\int_{\er^n}\exp\left(-\left\langle \left(\frac{1}{2}I_n-B\right)x,x\right\rangle\right)dx
=2^{-n/2}\left|\frac{1}{2}I_n-B\right|^{-1/2}
\\
&=|I_n-2B|^{-1/2}.
\end{align*}

We may represent $X\sim \caln(0,C)$ as $X\sim AY$ with $Y\sim{\cal N}(0,I_n)$ and $C=AA^T$. Thus 
\begin{align*}
\Ex \exp\left(-\sum_{i=1}^n\lambda_i X_i^2\right)
&=\Ex \exp(-\langle\Lambda X,X\rangle)
=\Ex \exp(-\langle\Lambda AY,AY\rangle)
=\Ex \exp(-\langle A^T\Lambda AY,Y\rangle)
\\
&=|I_n+2A^T\Lambda A|^{-1/2}=|I_n+2\Lambda C|^{-1/2},
\end{align*}
where to get the last equality we used the fact that $|I_n+A_1A_2|=|I_n+A_2A_1|$ for $A_1,A_n\in M_{n\times n}$.
\end{proof}

\begin{lem}
i) For any matrix $A\in M_{n\times n}$,
\begin{equation}
\label{eq:det1}
|I_n+A|=1+\sum_{\emptyset\neq J\subset [n]}|A_J|.
\end{equation}
ii) Suppose that $n=n_1+n_2$ and $A\in M_{n\times n}$has the block representation 
$A=\left(\begin{array}{ll} A_{11}& A_{12}\\A_{21}& A_{22} \end{array}\right)$, 
where $A_{ij}\in M_{n_i\times n_j}$ and $A_{11}$, $A_{22}$ are invertible.
Then 
\begin{equation}
\label{eq:det2}
|A|=|A_{11}||A_{22}|\left|I_{n_1}-A_{11}^{-1/2}A_{12}A_{22}^{-1}A_{21}A_{11}^{-1/2}\right|.
\end{equation}
Moreover, if $A$ is symmetric and positively defined then
\begin{equation}
\label{eq:det2a}
0\leq A_{11}^{-1/2}A_{12}A_{22}^{-1}A_{21}A_{11}^{-1/2}\leq I_{n_1}.
\end{equation}
\end{lem}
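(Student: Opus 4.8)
The plan is to prove the two parts by standard determinantal manipulations, keeping them separate.

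For part i) I would use multilinearity of the determinant in the columns. Writing the $j$-th column of $I_n+A$ as $e_j+A^{(j)}$, where $e_j$ is the $j$-th coordinate vector and $A^{(j)}$ the $j$-th column of $A$, multilinearity gives
\[
|I_n+A|=\sum_{S\subseteq [n]}|B_S|,
\]
where $B_S$ is the matrix whose $j$-th column is $A^{(j)}$ if $j\in S$ and $e_j$ if $j\notin S$. For a fixed $S$, a simultaneous permutation of rows and columns (which leaves the determinant unchanged, the sign appearing squared) moves the indices of $S$ to the front and brings $B_S$ into the block form $\left(\begin{array}{ll} A_S & 0\\ * & I_{n-|S|}\end{array}\right)$, since each column $e_j$ with $j\notin S$ carries a single nonzero entry in its own row. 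Hence $|B_S|=|A_S|$, with $S=\emptyset$ contributing $|I_n|=1$, and summing over $S$ yields \eqref{eq:det1}.

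For \eqref{eq:det2} I would first eliminate the square root and reduce to a Schur complement. The identity $|I+A_1A_2|=|I+A_2A_1|$ from the proof of Lemma \ref{lem:Lap}, taken with the factors $A_{11}^{-1/2}$ and $-A_{12}A_{22}^{-1}A_{21}A_{11}^{-1/2}$, gives
\[
\left|I_{n_1}-A_{11}^{-1/2}A_{12}A_{22}^{-1}A_{21}A_{11}^{-1/2}\right|=\left|I_{n_1}-A_{12}A_{22}^{-1}A_{21}A_{11}^{-1}\right|,
\]
so the particular square root is immaterial, and since $|A_{11}|\,|I_{n_1}-A_{12}A_{22}^{-1}A_{21}A_{11}^{-1}|=|A_{11}-A_{12}A_{22}^{-1}A_{21}|$, the right-hand side of \eqref{eq:det2} equals $|A_{22}|\,|A_{11}-A_{12}A_{22}^{-1}A_{21}|$. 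It remains to prove the block determinant formula $|A|=|A_{22}|\,|A_{11}-A_{12}A_{22}^{-1}A_{21}|$, which I would read off from the factorization
\[
A=\left(\begin{array}{ll} I_{n_1} & A_{12}A_{22}^{-1}\\ 0 & I_{n_2}\end{array}\right)\left(\begin{array}{ll} A_{11}-A_{12}A_{22}^{-1}A_{21} & 0\\ 0 & A_{22}\end{array}\right)\left(\begin{array}{ll} I_{n_1} & 0\\ A_{22}^{-1}A_{21} & I_{n_2}\end{array}\right)
\]
by taking determinants, the two triangular factors each having determinant $1$.

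Finally, for \eqref{eq:det2a} I would invoke symmetry of $A$, which gives $A_{21}=A_{12}^T$ and makes $A_{11},A_{22}$ symmetric positive definite with symmetric inverse square roots. Putting $B:=A_{22}^{-1/2}A_{12}^TA_{11}^{-1/2}$, a direct check gives $A_{11}^{-1/2}A_{12}A_{22}^{-1}A_{21}A_{11}^{-1/2}=B^TB\geq 0$, which is the lower bound. For the upper bound I would use
\[
I_{n_1}-A_{11}^{-1/2}A_{12}A_{22}^{-1}A_{21}A_{11}^{-1/2}=A_{11}^{-1/2}\left(A_{11}-A_{12}A_{22}^{-1}A_{21}\right)A_{11}^{-1/2},
\]
reducing the claim to positivity of the Schur complement $S:=A_{11}-A_{12}A_{22}^{-1}A_{21}$. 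This follows from the factorization above, which for symmetric $A$ reads $A=M\diag(S,A_{22})M^T$ with $M$ unit triangular; as a congruence of $\diag(S,A_{22})$ to the positive definite $A$ it forces $S>0$, whence $I_{n_1}-A_{11}^{-1/2}A_{12}A_{22}^{-1}A_{21}A_{11}^{-1/2}>0$. None of the steps is deep; the only places demanding care are the bookkeeping of the simultaneous row-column permutation in part i), where one must check that the surviving block is exactly $A_S$ and that the sign is $+1$, and the verification that the displayed triangular product really reproduces $A$, on which both \eqref{eq:det2} and the Schur positivity in \eqref{eq:det2a} depend.
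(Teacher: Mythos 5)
Your proposal is correct, and in part ii) it takes a genuinely different route from the paper's. For part i) you actually carry out the multilinearity argument that the paper only names (``induction or the Leibniz formula''), and the two delicate points are handled properly: the simultaneous row--column permutation indeed has sign $+1$, and the columns $e_j$, $j\notin S$, do put $B_S$ in block lower-triangular form with diagonal blocks $A_S$ and $I_{n-|S|}$. For part ii) the paper proceeds differently: it factors $A$ as $\diag(A_{11}^{1/2},A_{22}^{1/2})\,N\,\diag(A_{11}^{1/2},A_{22}^{1/2})$, where $N$ has identity diagonal blocks and off-diagonal blocks $A_{11}^{-1/2}A_{12}A_{22}^{-1/2}$ and $A_{22}^{-1/2}A_{21}A_{11}^{-1/2}$, evaluates $|N|$ by a block row operation, and proves the upper bound in \eqref{eq:det2a} via the discriminant (Cauchy--Schwarz) inequality $\langle A_{21}x,y\rangle^2\leq\langle A_{11}x,x\rangle\langle A_{22}y,y\rangle$ extracted from the quadratic form of $A$. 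You instead strip the square roots with Sylvester's identity $|I+A_1A_2|=|I+A_2A_1|$, reduce \eqref{eq:det2} to the classical Schur-complement formula $|A|=|A_{22}|\,|A_{11}-A_{12}A_{22}^{-1}A_{21}|$ via the unipotent block-$LDU$ factorization, and then reuse that very factorization as a congruence to conclude $A_{11}-A_{12}A_{22}^{-1}A_{21}>0$, which gives the upper bound in \eqref{eq:det2a} after conjugating by $A_{11}^{-1/2}$. Your route buys a little extra generality and economy: \eqref{eq:det2} is seen to hold for \emph{any} matrix square root $A_{11}^{-1/2}$, the proof of \eqref{eq:det2} never needs $A_{22}^{\pm 1/2}$ to exist, and the determinant identity and the operator bound both flow from a single factorization. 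The paper's version is more self-contained at the level of elementary quadratic-form manipulations, using Sylvester's identity nowhere beyond its earlier appearance in the proof of Lemma \ref{lem:Lap}. Both arguments are complete and correct.
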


\begin{proof}
i) This formula may be verified in several ways -- e.g. by induction on $n$ or by using the Leibniz formula for the
determinant.

ii) We have
\[
\left(\begin{array}{cc} A_{11}& A_{12} \\A_{21}& A_{22} \end{array}\right)
=\left(\begin{array}{cc} A_{11}^{1/2}& 0 \\0 & A_{22}^{1/2} \end{array}\right)
\left(\begin{array}{cc} I_{n_1}& A_{11}^{-1/2}A_{12}A_{22}^{-1/2} 
\\A_{22}^{-1/2}A_{21}A_{11}^{-1/2}& I_{n_2} \end{array}\right)
\left(\begin{array}{cc} A_{11}^{1/2}& 0 \\0& A_{22}^{1/2} \end{array}\right)
\]
and
\begin{align*}
\left|\left(\begin{array}{cc} I_{n_1}& A_{11}^{-1/2}A_{12}A_{22}^{-1/2} 
\\A_{22}^{-1/2}A_{21}A_{11}^{-1/2}& I_{n_2} \end{array}\right)\right|
&=
\left|\left(\begin{array}{cc} I_{n_1}-A_{11}^{-1/2}A_{12}A_{22}^{-1}A_{21}A_{11}^{-1/2}& 0 
\\A_{22}^{-1/2}A_{21}A_{11}^{-1/2}& I_{n_2} \end{array}\right)\right|
\\
&=\left|I_{n_1}-A_{11}^{-1/2}A_{12}A_{22}^{-1}A_{21}A_{11}^{-1/2}\right|.
\end{align*}

To show the last part of the statement notice that 
$A_{11}^{-1/2}A_{12}A_{22}^{-1}A_{21}A_{11}^{-1/2}=B^TB\geq 0$, where $B:=A_{22}^{-1/2}A_{21}A_{11}^{-1/2}$.
If $A$ is positively defined then for any $t\in\er$, $x\in \er^{n_1}$ and $y\in \er^{n_2}$
we have $t^2\langle A_{11}x,x\rangle+2t\langle A_{21}x,y\rangle+\langle A_{22}y,y\rangle\geq 0$.
This implies $\langle A_{21}x,y\rangle^2\leq \langle A_{11}x,x\rangle\langle A_{22}y,y\rangle$.
Replacing $x$ by $A_{11}^{-1/2}x$ and $y$ by $A_{22}^{-1/2}y$ we get
$\langle Bx,y\rangle^2\leq |x|^2|y|^2$. Choosing $y=Bx$ we get $\langle B^TBx,x\rangle\leq |x|^2$,
i.e. $B^TB\leq I_{n_1}$.
\end{proof}

\begin{lem}
\label{lem:diff}
Let  $f(x,\tau)$ be the density of the random vector $Z(\tau)$ defined above. Then for any Borel set $K$ in $[0,\infty)^n$
and any $\lambda_1,\ldots,\lambda_n\geq 0$,
\[
\int_{K}e^{-\sum_{i=1}^n\lambda_i x_i}\frac{\partial}{\partial\tau}f(x,\tau)dx
=\frac{\partial}{\partial\tau}\int_{K}e^{-\sum_{i=1}^n\lambda_i x_i}f(x,\tau)dx.
\]
\end{lem}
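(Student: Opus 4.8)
The plan is to recognize this as a differentiation-under-the-integral-sign statement and to justify the interchange by the standard dominated-convergence criterion. Since $0<e^{-\sum_{i}\lambda_i x_i}\le 1$ on $[0,\infty)^n$ (as $\lambda_i\ge 0$), it suffices to produce a function $g$, integrable on $[0,\infty)^n$, that dominates $\bigl|\frac{\partial}{\partial\tau}f(x,\tau)\bigr|$ uniformly for $\tau\in[0,1]$. Granting such a $g$, I would fix $\tau$, form the difference quotient $h^{-1}\bigl(f(x,\tau+h)-f(x,\tau)\bigr)$, and apply the mean value theorem in $\tau$ to write it as $\frac{\partial}{\partial\tau}f(x,\tau^\ast)$ for some intermediate $\tau^\ast$; this is bounded by $g(x)$, so letting $h\to 0$ the dominated convergence theorem yields both the $\tau$-differentiability of $\int_K e^{-\sum\lambda_i x_i}f\,dx$ and the claimed identity.

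Next I would write $f$ explicitly. Since $Z_i(\tau)=\tfrac12 X_i(\tau)^2$ with $X(\tau)\sim\caln(0,C(\tau))$, the substitution $y_i=\pm\sqrt{2x_i}$ gives, for $x\in(0,\infty)^n$,
\[
f(x,\tau)=(2\pi)^{-n/2}|C(\tau)|^{-1/2}\Big(\prod_{i=1}^n (2x_i)^{-1/2}\Big)\sum_{\ve\in\{-1,1\}^n}\exp\Big(-\tfrac12\langle C(\tau)^{-1}y_\ve,y_\ve\rangle\Big),
\]
where $y_\ve=(\ve_1\sqrt{2x_1},\ldots,\ve_n\sqrt{2x_n})$. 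Before differentiating I would record that $C(\tau)$ is positive definite for every $\tau\in[0,1]$: taking $J=[n]$ in the determinant computation of the main proof gives $|C(\tau)|=|C_{11}||C_{22}|\prod_i(1-\tau^2\mu(i))$ with $\mu(i)\in[0,1)$ (strictly, since $|C(1)|\neq 0$), so $|C(\tau)|\ge|C(1)|>0$ throughout. As $C(\tau)$ is affine in $\tau$, its entries and those of $C(\tau)^{-1}$, together with $|C(\tau)|^{\pm1/2}$, are smooth and bounded on $[0,1]$, and $\lambda_{\max}(C(\tau))$ is bounded above, so $\langle C(\tau)^{-1}y,y\rangle\ge c|y|^2$ for some $c>0$ uniform in $\tau$.

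With these bounds in hand the differentiation is routine. Because $C(\tau)$ is affine, $\frac{\partial}{\partial\tau}C(\tau)=B$ is a constant matrix and $\frac{\partial}{\partial\tau}C(\tau)^{-1}=-C(\tau)^{-1}BC(\tau)^{-1}$ is bounded on $[0,1]$. Differentiating $f$ therefore produces the bounded factor from $|C(\tau)|^{-1/2}$ together with, for each sign pattern, the quadratic form $\tfrac12\langle C(\tau)^{-1}BC(\tau)^{-1}y_\ve,y_\ve\rangle$, which is at most linear in the $x_i$. Using the uniform lower bound on the exponent I would obtain
\[
\Big|\tfrac{\partial}{\partial\tau}f(x,\tau)\Big|\le c_1\Big(\prod_{i=1}^n x_i^{-1/2}\Big)\Big(1+\sum_{i=1}^n x_i\Big)e^{-c\sum_{i=1}^n x_i}=:g(x),\qquad \tau\in[0,1],
\]
and $g$ is integrable on $(0,\infty)^n$ since the integral factorizes and each one-dimensional integral $\int_0^\infty x^{-1/2}(1+x)e^{-cx}\,dx$ is finite.

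The main obstacle is exactly this two-sided integrability: one must simultaneously absorb the boundary blow-up $\prod_i x_i^{-1/2}$ coming from the Jacobian of the squaring map (which is $L^1$ near $0$) and the polynomial growth in $x$ that differentiation in $\tau$ pulls down from the Gaussian exponent (which is swallowed by the exponential decay). Both are visible in the explicit form above, but the control must hold uniformly in $\tau$ across the \emph{closed} interval $[0,1]$; the one genuinely substantive point is therefore the uniform nondegeneracy of $C(\tau)$ on $[0,1]$, which keeps $|C(\tau)|^{-1/2}$ bounded and $\lambda_{\max}(C(\tau))$ finite, and hence makes the dominating function $\tau$-free.
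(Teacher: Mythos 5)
Your proposal follows essentially the same route as the paper's proof: write $f(x,\tau)$ explicitly via the squaring substitution (sum over sign patterns with the Jacobian factor $\prod_i x_i^{-1/2}$), use the uniform nondegeneracy of $C(\tau)$ on the compact interval $[0,1]$ to get a $\tau$-free integrable dominating function of the form $c_1\prod_i x_i^{-1/2}\bigl(1+\sum_i x_i\bigr)e^{-c\sum_i x_i}$ for $\bigl|\frac{\partial}{\partial\tau}f(x,\tau)\bigr|$, and conclude by the mean value theorem plus dominated convergence. This matches the paper's argument in all essential points, so the proposal is correct.
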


\begin{proof}
The matrix $C$ is nondegenerate,  therefore matrices $C_{11}$ and $C_{22}$ are nondegerate and $C(\tau)$ is 
nondegenerate for any $\tau\in [0,1]$. Random vector $X(\tau)\sim {\mathcal N}(0,C(\tau))$ has the density
$|C(\tau)|^{-1/2}(2\pi)^{-n/2}\exp(-\frac{1}{2}\langle C(\tau)^{-1}x,x\rangle)$. 
Standard calculation shows that $Z(\tau)$ has the density
\[
f(x,\tau)=|C(\tau)|^{-1/2}(4\pi)^{-n/2}\frac{1}{\sqrt{x_1\cdots x_n}}
\sum_{\ve\in \{-1,1\}^n}e^{-\langle C(\tau)^{-1}\sqrt{x}_\ve,\sqrt{x}_\ve\rangle}\ind_{(0,\infty)^n}(x),
\]
where for $\ve\in \{-1,1\}^n$ and $x\in(0,\infty)^n$ we set $\sqrt{x}_{\ve}:=(\ve_i \sqrt{x_i})_i$.  

The function $\tau\mapsto |C(\tau)|^{-1/2}$ is smooth on $[0,1]$, in particular 
\[
\sup_{\tau\in [0,1]}|C(\tau)|^{-1/2}+\sup_{\tau\in [0,1]}\frac{\partial}{\partial \tau}|C(\tau)|^{-1/2}=:M<\infty.
\] 
Since $C(\tau)=\tau C(1)+(1-\tau)C(0)$ we have $\frac{\partial}{\partial \tau}C(\tau)=C(1)-C(0)$ and
\[
\frac{\partial}{\partial \tau}e^{-\langle C(\tau)^{-1}\sqrt{x}_\ve,\sqrt{x}_\ve\rangle}
=-\langle C(\tau)^{-1}(C(1)-C(0))C(\tau)^{-1}\sqrt{x}_\ve,\sqrt{x}_\ve\rangle
e^{-\langle C(\tau)^{-1}\sqrt{x}_\ve,\sqrt{x}_\ve\rangle} .
\]
The continuity of the function $\tau\mapsto C(\tau)$ gives 
\[
\langle C(\tau)^{-1}\sqrt{x}_\ve,\sqrt{x}_\ve\rangle\geq a\langle \sqrt{x}_\ve,\sqrt{x}_\ve\rangle=a\sum_{i=1}^n|x_i|
\]
 and 
\[
\langle C(\tau)^{-1}(C(1)-C(0))C(\tau)^{-1}\sqrt{x}_\ve,\sqrt{x}_\ve\rangle
\leq b\langle \sqrt{x}_\ve,\sqrt{x}_\ve\rangle=b\sum_{i=1}^n|x_i|
\]
for some $a>0$, $b<\infty$.
Hence for $x\in (0,\infty)^n$
\[
\sup_{\tau\in [0,1]}\left|\frac{\partial}{\partial \tau}f(x,\tau)\right|
\leq g(x):=M\pi^{-n/2}\frac{1}{\sqrt{x_1\cdots x_n}}\left(1+b\sum_{i=1}^n|x_i|\right)e^{-a\sum_{i=1}^n|x_i|}.
\]
Since $g(x)\in L_1((0,\infty)^n$ and $e^{-\sum_{i=1}^n\lambda_i x_i}\geq 1$ the statement easily follows by the Lebesgue
dominated convergence theorem.
\end{proof}

Let for $\alpha>0$, 
\[
g_{\alpha}(x,y):=e^{-x-y}\sum_{k=0}^{\infty}\frac{x^{k+\alpha-1}}{\Gamma(k+\alpha)}\frac{y^k}{k!}\quad
x>0,y\geq 0.
\]
For $\mu,\alpha_1,\ldots,\alpha_n>0$ and a random vector $Y=(Y_1,\ldots,Y_n)$ such that $\Pr(Y_i\geq 0)=1$ we set
\[
h_{\alpha_1,\ldots,\alpha_n,\mu,Y}(x_1,\ldots,x_n)
:=\Ex\left[\prod_{i=1}^n \frac{1}{\mu}g_{\alpha_i}\left(\frac{x_i}{\mu},Y_i\right)\right],\quad
x_1,\ldots,x_n>0.
\] 

\begin{lem}
\label{lem:proph}
Let $\mu>0$ and $Y$ be a random $n$-dimensional vector with nonnegative coordinates. For 
$\alpha=(\alpha_1,\ldots,\alpha_n)\in (0,\infty)^n$ set
$h_{\alpha}:=f_{\alpha_1,\ldots,\alpha_n,\mu,Y}$.\\
i) For any $\alpha\in (0,\infty)^n$, $h_{\alpha}\geq 0$ and $\int_{(0,\infty)^n}h_\alpha(x)dx=1$.\\
ii) If $\alpha\in (0,\infty)^n$ and $\alpha_i>1$ then $\lim_{x_i\rightarrow 0+}h_\alpha(x)=0$,
$\frac{\partial}{\partial x_i}h_\alpha(x)$ exists and
\[
\frac{\partial}{\partial x_i}h_\alpha(x)=h_{\alpha-e_i}-h_{\alpha}.
\]
iii) If $\alpha\in (1,\infty)^n$ then for any $J\subset [n]$,
$\frac{\partial^|J|}{\partial x_{J}}h_\alpha(x)$ exists and belongs to $L_1((0,\infty)^n)$. Moreover
for $\lambda_1,\ldots,\lambda_n\geq 0$,
\[
\int_{(0,\infty)^n}e^{-\sum_{i=1}^n\lambda_i x_i}\frac{\partial^{|J|}}{\partial x_{J}}h_\alpha(x)dx
=\prod_{i\in J}\lambda_{i}\int_{(0,\infty)^n}e^{-\sum_{i=1}^n\lambda_i x_i}h_\alpha(x)dx.
\]
\end{lem}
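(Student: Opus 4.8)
My plan is to base everything on a single one-variable recurrence for $g_\alpha$, establish it first, and then transport it through the expectation defining $h_\alpha$.

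\emph{Part i).} First I would observe that for fixed $y\ge 0$ the function $x\mapsto g_\alpha(x,y)$ is a probability density on $(0,\infty)$: integrating the defining series term by term (all terms are nonnegative, so Tonelli applies) and using $\int_0^\infty e^{-x}x^{k+\alpha-1}\,dx=\Gamma(k+\alpha)$ collapses it to $e^{-y}\sum_{k\ge0}y^k/k!=1$. After the substitution $u=x_i/\mu$ this gives $\int_0^\infty \frac1\mu g_{\alpha_i}(x_i/\mu,Y_i)\,dx_i=1$ for every value of $Y_i$, and interchanging $\int_{(0,\infty)^n}dx$ with $\Ex$ (again Tonelli, the integrand being nonnegative) yields $h_\alpha\ge 0$ and $\int_{(0,\infty)^n}h_\alpha=1$.

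\emph{Part ii).} The engine is the identity $\frac{\partial}{\partial x}g_\alpha(x,y)=g_{\alpha-1}(x,y)-g_\alpha(x,y)$ for $\alpha>1$, which I would prove by differentiating the series term by term (legitimate because the differentiated series converges locally uniformly on $(0,\infty)$): the $e^{-x}$ factor produces $-g_\alpha$, while $(k+\alpha-1)/\Gamma(k+\alpha)=1/\Gamma(k+\alpha-1)$ reindexes the remaining series into exactly $g_{\alpha-1}$. The boundary value $\lim_{x\to0+}g_\alpha(x,y)=0$ is read off from the lowest-order term $x^{\alpha-1}/\Gamma(\alpha)$, which vanishes precisely when $\alpha>1$. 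To pass to $h_\alpha$ I would differentiate under the expectation and apply the chain rule; combined with the recurrence this expresses $\frac{\partial}{\partial x_i}h_\alpha$ through $h_{\alpha-e_i}$ and $h_\alpha$ and gives the stated relation, while dominated convergence gives $\lim_{x_i\to0+}h_\alpha=0$. \emph{This differentiation/limit under $\Ex$ is the main obstacle}: one must exhibit, for $x_i$ ranging over a neighborhood of a fixed point, an $\Ex$-integrable dominating function for the $x_i$-derivative of $\frac1\mu g_{\alpha_i}(x_i/\mu,Y_i)$. I would get it from the bound $\sup_{y\ge0}g_\beta(x,y)<\infty$, locally uniform in $x$, which holds because $\sum_k (xy)^k/(k!\,\Gamma(k+\beta))$ grows only like $e^{2\sqrt{xy}}$ and is therefore overwhelmed by the $e^{-y}$ prefactor.

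\emph{Part iii).} Here I would iterate ii). Since $\alpha\in(1,\infty)^n$, each differentiation in a direction $i\in J$ is licensed by ii) and lowers $\alpha_i$ by one to a value still in $(0,\infty)$; hence $\frac{\partial^{|J|}}{\partial x_J}h_\alpha$ exists and is a finite signed combination $\sum_{I\subseteq J}\pm\, h_{\alpha-\sum_{i\in I}e_i}$ of functions of the same type with index vectors in $(0,\infty)^n$. By i) each summand is nonnegative with integral $1$, so the combination lies in $L_1((0,\infty)^n)$. For the Laplace identity I would integrate by parts once in each variable $x_i$, $i\in J$, taking $x_i$ last so that the function being differentiated is $\frac{\partial^{|J|-1}}{\partial x_{J\setminus\{i\}}}h_\alpha$, a sum of functions whose $i$-index is still $\alpha_i>1$. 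The boundary term at $x_i=0$ then vanishes by the limit in ii), and the term at $x_i=\infty$ vanishes because each $g$-factor decays in its first argument (so even $\lambda_i=0$ is harmless). Each such integration by parts extracts one factor $\lambda_i$, and after all $|J|$ steps one is left with $\prod_{i\in J}\lambda_i\int_{(0,\infty)^n}e^{-\sum_i\lambda_ix_i}h_\alpha\,dx$, as claimed. Once ii) is in place this last part is routine; the only care needed is the Fubini justification for performing the $x_i$-integrations one at a time, which is guaranteed by the $L_1$ bound just established.
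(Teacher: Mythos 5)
Your proposal follows exactly the same route as the paper's proof: part i) by Tonelli and the gamma integral; part ii) via the recurrence $\frac{\partial}{\partial x}g_\alpha=g_{\alpha-1}-g_\alpha$ plus a bound on $g_\beta$ uniform in $y$ and locally uniform in $x$, then dominated convergence; part iii) by iterating ii) to write $\frac{\partial^{|J|}}{\partial x_J}h_\alpha$ as a signed sum of functions $h_\beta$ (all in $L_1$, with integral $1$ each), followed by integration by parts one variable at a time, the boundary terms at $0$ vanishing because the $i$-th index of every summand of $\frac{\partial^{|J|-1}}{\partial x_{J\setminus\{i\}}}h_\alpha$ is still $\alpha_i>1$. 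Parts i) and ii) are correct as written (your Bessel-type bound $e^{-x-y}x^{\beta-1}e^{2\sqrt{xy}}=x^{\beta-1}e^{-(\sqrt x-\sqrt y)^2}$ plays the role of the paper's bound $2x^{\beta-1}(e^{-x}+x)$, obtained there from $\Gamma(k+\beta)\ge\tfrac12(k-1)!$).

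The one genuine weak point is the boundary term at $x_i=\infty$ in part iii). You dismiss it because ``each $g$-factor decays in its first argument (so even $\lambda_i=0$ is harmless)''. Pointwise decay in $x$ for each fixed $y$ is true, but $h_\beta$ is an expectation over $Y$, and the lemma assumes \emph{no} moments of $Y$; to pass the limit $x_i\to\infty$ through $\Ex$ you need a dominating function that is $\Ex$-integrable, i.e.\ essentially a bound on $g_\beta(x,y)$ uniform in $y$ that stays bounded as $x\to\infty$. Neither your bound nor the paper's provides this: the supremum over $y$ of $x^{\beta-1}e^{-(\sqrt x-\sqrt y)^2}$ is $x^{\beta-1}$ (attained at $y=x$), which grows when $\beta>1$. (The uniform decay $\sup_{y}g_\beta(x,y)\to0$ is in fact true, but proving it needs a sharper estimate, e.g.\ $\sup_{y\ge0}e^{-y}y^k/k!\le(2\pi k)^{-1/2}$.) Two standard patches close the gap: (a) for $\lambda_i>0$ the polynomial-in-$x_i$ bound on $h_\beta$ already forces $e^{-\lambda_ix_i}\frac{\partial^{|J|-1}}{\partial x_{J\setminus\{i\}}}h_\alpha\to0$, and the case $\lambda_i=0$ then follows by letting $\lambda_i\to0+$ on both sides of the identity (dominated convergence, using $\frac{\partial^{|J|}}{\partial x_J}h_\alpha\in L_1$, while the right-hand side carries the factor $\lambda_i$); or (b) observe that for a.e.\ fixed values of the remaining coordinates the section $x_i\mapsto\frac{\partial^{|J|-1}}{\partial x_{J\setminus\{i\}}}h_\alpha$ is absolutely continuous with derivative in $L_1(dx_i)$ and is itself in $L_1(dx_i)$, hence has limit $0$ at infinity. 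With either patch your argument is complete; in fairness, the paper is equally terse at this exact spot (``induction on $|J|$ using integration by parts''), but since you made the claim explicit, the justification you gave is the one step that would not survive scrutiny.
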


\begin{proof}
i) Obviously $h_{\alpha}\in [0,\infty]$. We have for any $y\geq 0$ and $\alpha>0$,
\[
\int_{0}^\infty\frac{1}{\mu}g_\alpha\left(\frac{x}{\mu},y\right)dx
=\int_{0}^\infty g_\alpha(x,y)dx=1.
\]
Hence by the Fubini theorem,
\[
\int_{(0,\infty)^n}h_\alpha(x)dx=
\Ex\prod_{j=1}^k\int_{0}^\infty\frac{1}{\mu}g_{\alpha_i}\left(\frac{x_i}{\mu},Y_i\right)dx_i=1.
\]

ii) It is well known that $\Gamma(x)$ is decreasing on $(0,x_0]$ and increasing on $[x_0,\infty)$, where $1<x_0<2$ and
$\Gamma(x_0)>1/2$. Therefore for $k=1,\ldots$ and $\alpha>0$, $\Gamma(k+\alpha)\geq \frac{1}{2} \Gamma(k)=\frac{1}{2} (k-1)!$ and
\[
g_{\alpha}(x,y)\leq e^{-x}\sum_{k=0}^{\infty}\frac{x^{k+\alpha-1}}{\Gamma(k+\alpha)}
\leq 2\left(x^{\alpha-1}e^{-x}+x^{\alpha}\sum_{k=1}^{\infty}\frac{x^{k-1}}{(k-1)!}e^{-x}\right)
=2x^{\alpha-1}(e^{-x}+x).
\]  
This implies that for $\alpha>0$ and $0<a<b<\infty$, $g_{\alpha}(x,y)\leq C(\alpha,a,b)<\infty$ for 
$x\in (a,b)$ and $y\geq 0$.
Moreover,
\[
h_{\alpha}(x)\leq \left(\frac{2}{\mu}\right)^n\prod_{i=1}^n \left(\frac{x_i}{\mu}\right)^{\alpha_i-1}\left(1+\frac{x_i}{\mu}\right).
\]
In particular $\lim_{x_i\rightarrow 0+}h_\alpha(x)=0$ if $\alpha_i\geq 1$.
Observe that for $\alpha>1$, $\frac{\partial}{\partial x}g_\alpha=g_{\alpha-1}-g_{\alpha}$.
Standard application of the Lebegue dominated convergence theorem concludes the proof of part ii).

iii) By ii) we get
\[
\frac{\partial^{|J|}}{\partial x_{J}}h_\alpha
=\sum_{\delta\in \{0,1\}^J}(-1)^{|J|-\sum_{i\in J}\delta_i}f_{\alpha-\sum_{i\in J}\delta_i e_{i}}\in L_1((0,\infty)^n).
\]
Moreover $\lim_{x_j\rightarrow 0+}\frac{\partial^{|J|}}{\partial x_{J}}h_\alpha(x)=0$
for $j\notin J$. We finish the proof by induction on $|J|$ using integration by parts.
\end{proof}

Let $C$ be a strictly positively defined symmetric $n\times n$ matrix. 
Then there exists $\mu>0$ such that $C-\mu I_n$ is positively defined, so 
$C=\mu I_n+AA^T$ for some $A\in M_{n\times n}$. 
Let $(g_{j}^{(l)})_{j\leq n,l\leq k}$ be i.i.d. $\mathcal{N}(0,1)$ r.v's.
Set 
\[
Y_{i}=\frac{1}{2\mu}\sum_{l=1}^k\sum_{j,j'\leq n} g_{j}^{(l)}g_{j'}^{(l)}a_{i,j}a_{i,j'}=
\sum_{l=1}^k\left(\sum_{j=1}^n \frac{1}{\sqrt{2\mu}}g_j^{(l)}a_{i,j}\right)^2,\quad
1\leq i\leq n
\]
and 
\begin{equation}
\label{eq:defhkC}
h_{k,C}:=h_{\frac{k}{2},\ldots,\frac{k}{2},\mu,Y}.
\end{equation}

\begin{lem}
\label{lem:LaplhkC}
For any $\lambda_1,\ldots,\lambda_n\geq 0$ we have
\[
\int_{(0,\infty)^n}e^{-\sum_{i=1}^n\lambda_i x_i}h_{k,C}(x)=|I_n+\Lambda C|^{-\frac{k}{2}},
\] 
where $\Lambda=\diag(\lambda_1,\ldots,\lambda_n)$.
\end{lem}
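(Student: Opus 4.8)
The plan is to reduce everything to the one-variable Laplace transform of the function $g_\alpha(\cdot,y)$ and then to exploit the fact that, by construction, each $Y_i$ is a sum of squares of Gaussian variables, so that the remaining expectation can be evaluated directly by Lemma \ref{lem:Lap}.

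First I would compute, for fixed $y\ge 0$, $\alpha>0$ and $\lambda\ge 0$,
\[
\int_0^\infty e^{-\lambda x}g_\alpha(x,y)\,dx=(1+\lambda)^{-\alpha}\exp\left(-\frac{\lambda y}{1+\lambda}\right).
\]
This follows by integrating the defining series for $g_\alpha$ term by term (all terms are nonnegative, so Tonelli applies), using $\int_0^\infty e^{-(1+\lambda)x}x^{k+\alpha-1}\,dx=\Gamma(k+\alpha)(1+\lambda)^{-(k+\alpha)}$ and then recognizing the exponential series in $y/(1+\lambda)$. A change of variables $x\mapsto x/\mu$ promotes this to $\int_0^\infty e^{-\lambda x}\frac{1}{\mu}g_\alpha(x/\mu,y)\,dx=(1+\mu\lambda)^{-\alpha}\exp(-\mu\lambda y/(1+\mu\lambda))$.

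Next, inserting the definition $h_{k,C}=h_{k/2,\dots,k/2,\mu,Y}$ and interchanging the expectation with the integral (again Tonelli, since everything is nonnegative), the product over $i$ factorizes and the previous computation yields
\[
\int_{(0,\infty)^n}e^{-\sum_i\lambda_i x_i}h_{k,C}(x)\,dx=|I_n+\mu\Lambda|^{-k/2}\,\Ex\exp\left(-\sum_{i=1}^n\frac{\mu\lambda_i}{1+\mu\lambda_i}Y_i\right),
\]
where I have written $\prod_i(1+\mu\lambda_i)=|I_n+\mu\Lambda|$. It now remains to evaluate the expectation. Here I would use the representation $Y_i=\sum_{l=1}^k (W_i^{(l)})^2$ with $W^{(l)}:=\frac{1}{\sqrt{2\mu}}Ag^{(l)}$, where $g^{(l)}=(g_1^{(l)},\dots,g_n^{(l)})$; the vectors $W^{(1)},\dots,W^{(k)}$ are i.i.d.\ $\caln(0,\frac{1}{2\mu}AA^T)$. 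By independence over $l$ and Lemma \ref{lem:Lap} applied to the diagonal matrix $B:=\diag(\mu\lambda_i/(1+\mu\lambda_i))$, the expectation equals $|I_n+\tfrac{1}{\mu}BAA^T|^{-k/2}$.

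Finally I would combine the two determinant factors. Since $B$ and $\Lambda$ are diagonal, one checks directly that $(I_n+\mu\Lambda)\tfrac{1}{\mu}B=\Lambda$, whence
\[
(I_n+\mu\Lambda)\left(I_n+\tfrac{1}{\mu}BAA^T\right)=I_n+\mu\Lambda+\Lambda AA^T=I_n+\Lambda(\mu I_n+AA^T)=I_n+\Lambda C.
\]
Taking determinants gives $|I_n+\mu\Lambda|\,|I_n+\tfrac{1}{\mu}BAA^T|=|I_n+\Lambda C|$, so the two factors of exponent $-k/2$ collapse to the claimed $|I_n+\Lambda C|^{-k/2}$. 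I expect the only genuinely delicate points to be the justification of the term-by-term integration and of the Fubini/Tonelli interchanges; the rest is the Gaussian Laplace transform of Lemma \ref{lem:Lap} together with the elementary (diagonal) determinant identity above.
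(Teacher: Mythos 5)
Your proof is correct and follows essentially the same route as the paper: term-by-term integration of the series for $g_\alpha$ to get the one-dimensional Laplace transform, Tonelli to factorize and reduce to $|I_n+\mu\Lambda|^{-k/2}\,\Ex\exp\bigl(-\sum_i\tfrac{\mu\lambda_i}{1+\mu\lambda_i}Y_i\bigr)$, Lemma \ref{lem:Lap} applied to the Gaussian representation of $Y$, and a diagonal-matrix determinant identity to collapse the two factors. Your explicit final step $(I_n+\mu\Lambda)(I_n+\tfrac{1}{\mu}BAA^T)=I_n+\Lambda C$ is in fact a welcome addition, since the paper leaves this computation implicit (and its last display even has a typo, reading $|I_n+C|^{-k/2}$ where $|I_n+\Lambda C|^{-k/2}$ is meant).
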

 
\begin{proof}
We have for any $\alpha,\mu>0$ and $\lambda,y\geq 0$
\begin{align*}
\int_0^\infty \frac{1}{\mu} e^{-\lambda x}g_{\alpha}\left(\frac{x}{\mu},y\right)dx
&=e^{-y}\sum_{k=0}^{\infty}\frac{y^k}{k!\Gamma(k+\alpha)}\int_{0}^{\infty}e^{-(\lambda+\frac{1}{\mu})x}\frac{x^{k+\alpha-1}}{\mu^{k+\alpha}}dx
\\
&=e^{-y}\sum_{k=0}^{\infty}\frac{y^k}{k!(1+\mu\lambda)^{k+\alpha}}=(1+\mu\lambda)^{-\alpha}
e^{-\frac{\mu\lambda}{1+\mu\lambda}y}.
\end{align*}
By the  Fubini theorem we have 
\begin{align*}
\int_{(0,\infty)^n}e^{-\sum_{i=1}^n\lambda_i x_i}h_{k,C}(x)dx
&=
\Ex \prod_{i=1}^n\int_{0}^\infty e^{-\lambda_i x_i}\frac{1}{\mu}g_{k/2}\left(\frac{x_i}{\mu},Y_i\right)dx_i
\\
&=|I_n+\mu\Lambda|^{-\frac{k}{2}}\Ex e^{-\sum_{i=1}^n\frac{\mu\lambda_i}{1+\mu\lambda_i}Y_i}.
\end{align*}
Observe that $Y_i=\sum_{l=1}^k (X_i^{(l)})^2$, where $X^{(l)}:=(X_i^{(l)})_{i\leq n}$ are independent
${\mathcal N}(0,\frac{1}{2\mu}AA^T)$. Therefore by Lemma \ref{lem:Lap} we have
\[
\int_{(0,\infty)^n}e^{-\sum_{i=1}^n\lambda_i x_i}h_{k,C}(x)dx
=|I_n+\mu\Lambda|^{-\frac{k}{2}}|I+2\mu\Lambda(I+\mu\Lambda)^{-1}\frac{1}{2\mu}AA^T|^{-\frac{k}{2}}
=|I_n+C|^{-\frac{k}{2}}.
\]

\end{proof}

\noindent
Institute of Mathematics\\
University of Warsaw\\
Banacha 2\\
02-097 Warszawa, Poland\\
{\tt rlatala@mimuw.edu.pl, ddmatlak@gmail.com}

\end{document}